\newcommand{\w}{w}
\newcommand{\f}{u}
\newcommand{\z}{v}
\newcommand{\id}{h}
\newcommand{\A}{\mathbf{A}}
\newcommand{\BB}{\mathbf{B}}
\newcommand{\C}{\mathbf{C}}
\newcommand{\D}{\mathbf{D}}
\newcommand{\II}{\mathbf{I}}
\newcommand{\bb}{\mathbf{b}}
\newcommand{\bc}{\mathbf{c}}
\newcommand{\be}{\mathbf{e}}
\newcommand{\bt}{\mathbf{t}}
\newcommand{\bx}{\mathbf{x}}
\newcommand{\by}{\mathbf{y}}
\newcommand{\bn}{\mathbf{n}}
\newcommand{\rf}{A}
\newcommand{\G}{\mathcal{G}}            
\newtheorem{theorem}{Theorem}[section]
\newtheorem{corollary}[theorem]{Corollary}
\begin{document}

\title{Joint distribution of rises,  falls, and  number of runs in random sequences}

\author{Yong Kong\\
School of Public Health\\
Yale University\\
300 Cedar Street, New Haven, CT 06520, USA\\
email: \texttt{yong.kong@yale.edu} }

\date{}

\maketitle

\newpage

\begin{abstract}
  By using the matrix formulation of the two-step approach to the
  distributions of runs, a recursive relation
  and an explicit expression are derived for
  the generating function of
  the joint distribution of rises and falls for multivariate random sequences
  in terms of generating functions of individual letters,
  from which the generating functions
  of the joint distribution of rises,  falls, and  number of runs
  are obtained. 
  An explicit formula for the joint distribution
  of rises and falls with arbitrary specification is also obtained.
\end{abstract}

%
%
  
\newpage

\section{Introduction} \label{S:intro}

For a sequence $\sigma = ( a_1, a_2, \dots, a_n )$ where 
$a_i \in \{1, 2, \dots, k\}$, a pair of consecutive elements $a_i$ and $a_{i+1}$
is called a \emph{rise} when $a_i < a_{i+1}$,
a \emph{fall} when $a_i > a_{i+1}$,
and a \emph{level} when $a_i = a_{i+1}$.
If $r$, $s$, and $l$ denote the number of rises, falls, and
levels, they satisfy the obvious relation:
$
 r + s + l = n - 1.
$
 
A \emph{run} in $\sigma$ is defined
as a stretch of consecutive identical integers separated by other 
integers or the end of the sequence
\citep{Balakrishnan2002,Mood1940}.
If integer $i$ appears $n_i$ times in $\sigma$, 
we call $\sigma$ of \emph{specification} $\bn=[n_1, n_2, \dots, n_k]$,
with $\sum_{i=1}^k n_i = n$.
If $b$ denotes the number of runs, it's easy to see that
$b + l =n$, and $r+s+1=b$.
If integer $i$ has $b_i$ runs,
then $\sum_{i=1}^k b_i = b$.
We are interested in the number $A(\bn; r, s, \bb)$ of sequences with
exactly $r$ rises, $s$ falls, and numbers of runs
$\bb = [b_1, \dots, b_k]$ for a given specification $\bn$.
In Table~\ref{Tb:x2x3} the $10$ permutations of
specification $\bn=[2, 3]$ are listed.

When $\bn = [1,1, \dots, 1]$ and only rises are considered,
$A([1,1,, \dots, 1]; r)$ is the well-known Eulerian number $A(k, r)$
\citep{Graham:1994:CMF:562056}.
Simon Newcomb's problem is a generalization of the Eulerian number
with arbitrary specification
\citep{dillon1969}.
The explicit formula for this problem is given by
\citep{macmahon1915,dillon1969} 
\[
A(\bn; r) =
  \sum_{j=0}^r (-1)^j \binom{n+1}{j} \prod_{i=1}^k \binom{n_i + r -j}{n_i} .
\]
The joint distribution of rise and falls with arbitrary specification was studied by  
different researchers
~\citep{carlitz1972,jackson1977,reilly1979,Fu2000}.
Note that these researchers, except in
\cite{reilly1979},  included an additional
rise at the beginning and an additional fall at the end of each sequence.
For this problem, as Carlitz pointed out, no simple explicit formulas have been
found
~\citep{carlitz1972,carlitz1975}.
In Theorem~\ref{T:ers} of the present paper we give an explicit formula
for $A(\bn; r, s)$,
the joint distribution of rise and falls with arbitrary specification $\bn$.

%
%
%

%
%
%

%
%
%

In this paper we use the matrix formulation of the
two-step approach to distributions of runs
~\citep{Kong2006,Kong2013,Kong2014,Kong2015,Kong2016}
to obtain the generating function $\G_k$ for the whole system
in terms of the generating function $g_i$ of individual integers ($i=0, \dots, k$).
The advantage of expressing $\G_k$ in terms of $g_i$ is that,
by using appropriate $g_i$'s,
various joint distributions can be obtained easily.
The previously obtained results are
straightforward specializations,
and new results of joint distribution of rises, falls,
and number of runs are obtained similarly.
In Theorem~\ref{T:rGF} we first obtain a recursive relation for
$\G_k$,
and in Theorem~\ref{T:eGF} the explicit formula of $\G_k$
is obtained in terms of $g_i$.
The joint  distributions of rises, falls, and number of runs
are obtained in Corollary~\ref{C:xt}, Corollary~\ref{C:xtz},  and Corollary~\ref{C:xtiz} .
In Theorem~\ref{T:ers} an explicit formula
is derived for 
the joint distribution of rises and falls.

In the following, let
\[
\G_{k} = \sum_{\bn, r, s, \cdots}  \rf(\bn; r, s, \cdots) \w^r \f^s \cdots \bx^\bn
\]
where
$\bx = [x_1, x_2, \dots, x_k]$ and $\bx^\bn = x_1^{n_1} \cdots x_k^{n_k}$.
We use the notation $[x^n] f(x)$ to denote the coefficient of
$x^n$ in the series expansion of $f(x)$.

\begin{table} \label{Tb:x2x3}
  \centering
  \caption{The $10$ permutations of multiset $\{1,1,2,2,2\}$
    for $k=2$, $n_1=2$, and $n_2=3$.
  }
  \begin{tabular}{ cccccccc }
    \hline \hline
     & $r$ & $s$ & $l$ & $b$ & $b_1$ & $b_2$ & $\w^r \f^s \z^l t_1^{b_1}t_2^{b_2} $\\
     \hline
  $1 \enspace \UOLoverline{1 \enspace 2} \enspace 2 \enspace 2$ & $1$ & $0$ & $3$ & $2$ & $1$ & $1$ & $\w \z^3 t_1 t_2 $       \\
  ${}\UOLoverline{1 \enspace}[2]  \UOLunderline{\enspace}[1]\UOLoverline{\enspace 2} \enspace 2$ & $2$ & $1$ & $1$ & $4$ & $2$ & $2$ & $\w^2 \f \z t_1^2 t_2^2 $ \\
  ${}\UOLoverline{1 \enspace 2} \enspace \UOLunderline{2 \enspace}[1]\UOLoverline{\enspace 2}$   & $2$ & $1$ & $1$ & $4$ & $2$ & $2$ & $\w^2 \f \z t_1^2 t_2^2 $  \\
  ${}\UOLoverline{1 \enspace 2} \enspace 2 \enspace \UOLunderline{2 \enspace 1}$  & $1$ & $1$ & $2$ & $3$ & $2$ & $1$ & $\w \f \z^2 t_1^2 t_2$    \\
  ${}\UOLunderline{2 \enspace 1} \enspace \UOLoverline{1 \enspace  2} \enspace 2$ & $1$ & $1$ & $2$ & $3$ & $1$ & $2$ & $\w \f \z^2 t_1 t_2^2$    \\
  ${}\UOLunderline{2 \enspace}[1] \UOLoverline{\enspace }[2] \UOLunderline{\enspace}[1] \UOLoverline{\enspace 2}$ & $2$ & $2$ & $0$ & $5$ & $2$ & $3$ & $\w^2 \f^2 t_1^2 t_2^3$  \\
  ${}\UOLunderline{2 \enspace}[1] \UOLoverline{\enspace 2} \enspace \UOLunderline{2 \enspace 1}$ & $1$ & $2$  & $1$ & $4$ & $2$ & $2$ & $\w \f^2 \z t_1^2 t_2^2$    \\
  ${}2 \enspace \UOLunderline{2 \enspace 1} \enspace \UOLoverline{1 \enspace  2}$ & $1$ & $1$ & $2$ & $3$ & $1$ & $2$ & $\w \f \z^2 t_1 t_2^2$    \\
  $2 \enspace \UOLunderline{2 \enspace}[1] \UOLoverline{\enspace }[2] \UOLunderline{\enspace 1} $ & $1$ & $2$ & $1$ & $4$ & $2$ & $2$ & $\w \f^2 \z t_1^2 t_2^2 $   \\
  $2 \enspace 2 \enspace \UOLunderline{2 \enspace 1} \enspace 1 $ & $0$ & $1$ & $3$ & $2$ & $1$ & $1$ & $\f \z^3 t_1 t_2$ \\
\hline
\end{tabular}
\end{table}


\section{Recursive equation for $\G_{k}$}

\begin{theorem}[Recursive equation for $\G_{k}$] \label{T:rGF}
  The generating function $\G_{k}$ satisfies the recursive equation
\begin{equation} \label{E:rec}
\G_{k} = \G_{k-1} + \dfrac{\displaystyle g_k  (1 + \w \G_{k-1}) (1 + \f\G_{k-1})}
                        {\displaystyle 1 - \w\f g_k \G_{k-1}}
\end{equation}
with $\G_{1} = g_1$.
\end{theorem}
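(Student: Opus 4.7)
The plan is to decompose each sequence on $\{1,\ldots,k\}$ according to where the largest letter $k$ occurs, and then collapse the resulting structure into a geometric series. Sequences that avoid $k$ altogether are precisely the sequences enumerated by $\G_{k-1}$, which accounts for the additive term on the right of \eqref{E:rec}. For a sequence $\sigma$ that contains at least one $k$, I group its letters into maximal $k$-runs and the intervening blocks, writing uniquely
\[
\sigma \;=\; B_0\,K_1\,B_1\,K_2\,\cdots\,K_p\,B_p,\qquad p\ge 1,
\]
where each $K_i$ is a non-empty maximal run of $k$'s, and each $B_j$ is a sequence on $\{1,\ldots,k-1\}$; the endpoint blocks $B_0$ and $B_p$ may be empty, but $B_1,\ldots,B_{p-1}$ are forced to be non-empty by the maximality of the $K_i$.

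Next I read off the generating-function contribution of each piece. Every $K_i$ contributes $g_k$. Because $k$ is strictly larger than every letter of $\{1,\ldots,k-1\}$, each interface between a non-empty $B_j$ and an adjacent $K$-run is automatically a rise (when $B_j$ precedes the $K$-run) or a fall (when $B_j$ follows it), and these are the \emph{only} new rises and falls created by the insertion of the $K_i$'s; rises and falls internal to each $B_j$ are already tracked by $\G_{k-1}$. Folding the outgoing boundary into the endpoint block, $B_0$ contributes $1+\w\G_{k-1}$ (the $1$ for the empty case in which $\sigma$ begins with $K_1$, and $\w\G_{k-1}$ for a non-empty $B_0$ followed by a forced rise into $K_1$); symmetrically $B_p$ contributes $1+\f\G_{k-1}$. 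For each $1\le i\le p-1$, the bundle consisting of the fall from $K_i$ into $B_i$, the non-empty $B_i$ itself, the rise from $B_i$ into $K_{i+1}$, and $K_{i+1}$ contributes $\f\,\G_{k-1}\,\w\,g_k=\w\f\,g_k\,\G_{k-1}$.

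Assembling the pieces, sequences with exactly $p\ge 1$ maximal $k$-runs contribute
\[
(1+\w\G_{k-1})\,g_k\,(\w\f\,g_k\,\G_{k-1})^{p-1}\,(1+\f\G_{k-1}),
\]
and summing the geometric series over $p\ge 1$ produces exactly the fraction in \eqref{E:rec}. Adding back $\G_{k-1}$ from the $k$-free case yields the claimed recursion. The base case $\G_1=g_1$ is immediate: when only the letter $1$ is available, every non-empty sequence is a single run of $1$'s, with no rises or falls.

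The main obstacle is the bookkeeping at the two endpoints, namely verifying that the summand $1$ in $1+\w\G_{k-1}$ and in $1+\f\G_{k-1}$ correctly captures the case in which $\sigma$ begins or ends with a $k$-run (so that no boundary rise or fall is created on that side), while the other summand encodes the forced rise or fall produced by a non-empty endpoint block. Once this endpoint accounting is made explicit the computation is routine, and no direct manipulation of the underlying matrix formulation is needed for this particular recursion.
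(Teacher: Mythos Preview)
Your argument is correct, but it follows a genuinely different route from the paper. The paper never argues combinatorially: it starts from the matrix representation $\G_k=\be_k\mathbf{M}_k^{-1}\by_k$, writes $\mathbf{M}_k$ in $2\times 2$ block form with $\mathbf{M}_{k-1}$ in the upper-left corner, $\bb=-\w\by_{k-1}$, $\bc=-\f g_k\be_{k-1}$, and applies the standard Schur-complement formula for the inverse of a block matrix; substituting back and simplifying yields \eqref{E:rec} as a purely algebraic identity. Your proof instead exposes the combinatorial mechanism behind the recursion --- singling out the maximal runs of the largest letter and reading off the forced rise/fall at each interface --- which is more transparent and requires none of the linear-algebra machinery. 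The trade-off is that the matrix argument is insensitive to what the $g_i$ actually encode (it holds as an identity in the formal variables $g_1,\ldots,g_k,\w,\f$), whereas your argument relies on the interpretation of $g_k$ as the weight of a single maximal run of the letter $k$; in the setting of this paper that interpretation is exactly the intended one, so nothing is lost.
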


\begin{proof}
  Based on the matrix formulation of the two-step approach to the
  distributions of runs
  \citep{Kong2006,Kong2016},
  $\G_{k}$ is given by
\begin{equation} \label{E:matrix}
  \G_k = \be_k \mathbf{M}_k^{-1} \by_k,
\end{equation}
where 
\begin{equation*} \label{E:ek}
   \be_k = [\underbrace{1, 1, \cdots, 1}_k],
\end{equation*}
\begin{equation*} \label{E:fk}
   \by_k = [g_1, g_2, \cdots, g_k]^T,
\end{equation*}
and
\begin{equation} \label{E:M}
 \mathbf{M}_k =   
  \begin{bmatrix}
           1 & -g_1 \w  & -g_1 \w  & \cdots & -g_1 \w\\
 -g_2 \f &  1           & -g_2 \w  & \cdots & -g_2 \w \\
      \vdots & \vdots       & \vdots       & \vdots & \vdots      \\
 -g_{k-1} \f & \cdots & -g_{k-1} \f  & 1        & -g_{k-1} \w \\
 -g_k \f    & \cdots & -g_k \f     & -g_k \f  & 1 
   \end{bmatrix}.
\end{equation}
Here the indeterminates $\w$ and $\f$ are used to track the rises and falls, respectively,
and $g_i$'s are the generating function for the $i$th integer.
Eq.~\eqref{E:M} can be written in the following block form:
\[
\mathbf{M}_k = \begin{bmatrix}
  \mathbf{M}_{k-1}   & \bb \\
  \bc          & 1
  \end{bmatrix},
\]
where
\[
 \bb = [-g_1 \w, -g_2 \w, \cdots, -g_{k-1} \w]^T = -\w \mathbf{y}_{k-1},
\]
and
\[
 \bc = [-g_k \f, -g_k \f, \cdots, -g_k \f] = -\f g_k \mathbf{e}_{k-1} .
 \]

By applying the following matrix identity to Eq.~\eqref{E:matrix},
\[
\begin{bmatrix}
  \A         & \mathbf{b} \\
  \mathbf{c} & d  
\end{bmatrix}^{-1}
= \begin{bmatrix}
  \A^{-1} + \frac{1}{\id} \A^{-1} \bb \bc \A^{-1} & -\frac{1}{\id} \A^{-1} \bb \\
  -\frac{1}{\id} \bc  \A^{-1} & \frac{1}{\id}
  \end{bmatrix}
\]
where
\[
 \id = d - \bc \A^{-1} \bb ,
 \]
the recursive equation Eq.~\eqref{E:rec} is obtained: 
\begin{align*}
\G_{k} &=  \mathbf{e}_k \mathbf{M}_k^{-1} \mathbf{y}_k
= [\be_{k-1}, 1]
\begin{bmatrix}
  \A^{-1} + \frac{1}{\id} \A^{-1} \bb \bc \A^{-1} & -\frac{1}{\id} \A^{-1} \bb \\
  -\frac{1}{\id} \bc  \A^{-1} & \frac{1}{\id}
\end{bmatrix}
\begin{bmatrix}
  \by_{k-1} \\
  g_k
\end{bmatrix}\\
&=\G_{k-1} + \dfrac{\displaystyle g_k  (1 + \w \G_{k-1}) (1 + \f\G_{k-1})}
                        {\displaystyle 1 - \w\f g_k \G_{k-1}} .
\end{align*}

\end{proof}

\section{Explicit formula for $\G_{k}$
and generating functions of joint distributions of rises, falls, and runs}

\begin{theorem}[Explicit formula for $\G_{k}$] \label{T:eGF}
  The explicit formula for $\G_{k}$ in terms of the generating functions
  $g_i$ is given by
\begin{equation} \label{E:eGF}
\G_{k} = \dfrac{ \prod_{i=1}^k (1 + g_i \f) - \prod_{i=1}^k (1 + g_i \w)}
  {  \f\prod_{i=1}^k (1 + g_i \w) - \w\prod_{i=1}^k (1 + g_i \f) }
  = \frac{P_{k} - Q_{k}}{\f Q_{k} - \w P_{k}},
\end{equation}
where $P_{k} = \prod_{i=1}^k (1 + g_i \f)$
  and $Q_{k} = \prod_{i=1}^k (1 + g_i \w)$.
\end{theorem}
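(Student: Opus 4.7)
The plan is to prove Eq.~\eqref{E:eGF} by induction on $k$, using the recursion in Theorem~\ref{T:rGF}. The base case $k=1$ is an immediate substitution, since $P_1 - Q_1 = g_1(\f - \w)$ and $\f Q_1 - \w P_1 = \f - \w$ share the common factor $\f - \w$, so the claimed formula collapses to $g_1 = \G_1$.

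For the inductive step I would write $\G_{k-1} = N/D$ with $N := P_{k-1} - Q_{k-1}$ and $D := \f Q_{k-1} - \w P_{k-1}$, and plug into Eq.~\eqref{E:rec}. The core of the argument is three short computations showing that all three building blocks of the recursion share the denominator $D$ with especially compact numerators:
\[
D + \w N = (\f-\w)\, Q_{k-1}, \qquad D + \f N = (\f-\w)\, P_{k-1}, \qquad D - \w\f\, g_k N = \f Q_k - \w P_k,
\]
where the last equality uses $P_k = P_{k-1}(1 + \f g_k)$ and $Q_k = Q_{k-1}(1 + \w g_k)$. Substituting these into Eq.~\eqref{E:rec} yields
\[
\G_k = \frac{N}{D} + \frac{g_k (\f - \w)^2 P_{k-1} Q_{k-1}}{D\,(\f Q_k - \w P_k)}.
\]

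To finish, I would combine over the common denominator $D(\f Q_k - \w P_k)$ and verify that the resulting numerator factors as $D(P_k - Q_k)$ so that $D$ cancels, leaving exactly $(P_k - Q_k)/(\f Q_k - \w P_k)$. Expanding $P_k, Q_k$ in the difference $D(P_k-Q_k) - N(\f Q_k - \w P_k)$ and collecting the coefficient of $g_k$, the claim reduces to the purely algebraic identity
\[
(\f Q_{k-1} - \w P_{k-1})(\f P_{k-1} - \w Q_{k-1}) + \w\f\,(P_{k-1} - Q_{k-1})^2 = (\f - \w)^2 P_{k-1} Q_{k-1},
\]
both sides of which expand to $(\f^2 - 2\w\f + \w^2)\,P_{k-1} Q_{k-1}$. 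I expect the main obstacle to be pure bookkeeping — keeping $P$ versus $Q$, $k$ versus $k-1$, and the signs of $\w, \f$ straight — rather than any conceptual difficulty; the three simplifications above are what make the recursion telescope into a product, and the rest is polynomial algebra.
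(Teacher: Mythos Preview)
Your proposal is correct and follows exactly the approach the paper indicates: induction on $k$ using the recursion of Theorem~\ref{T:rGF}. The paper's published proof is the single sentence ``By induction using Eq.~\eqref{E:rec} in Theorem~\ref{T:rGF},'' so what you have written is precisely the detailed verification the paper leaves to the reader; your three auxiliary identities $D+\w N=(\f-\w)Q_{k-1}$, $D+\f N=(\f-\w)P_{k-1}$, $D-\w\f g_k N=\f Q_k-\w P_k$ and the quadratic identity at the end are the natural way to carry out that induction cleanly.
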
                        

\begin{proof}
  By induction using Eq.~\eqref{E:rec} in Theorem~\ref{T:rGF}.
\end{proof}

From Theorem~\ref{T:eGF} generating functions of various joint distributions
can be obtained by using appropriate $g_i$'s.
Some examples are:

\begin{corollary} \label{C:x}
  The explicit formula for $\G_{k}$ of joint distribution of
  rises and falls in terms of $x_i$ is given by
  \begin{equation} \label{E:xGF}
    \begin{aligned}
      \G_k &= \sum_{\bn, r, s}  \rf(\bn; r, s) \w^r \f^s \bx^\bn \\
      &= \dfrac{ \prod_{i=1}^k \left[ 1 - x_i(1-\f)\right] - \prod_{i=1}^k \left[ 1-x_i (1-\w)\right] }
  { \f \prod_{i=1}^k \left[ 1-x_i (1-\w)\right] - \w \prod_{i=1}^k \left[ 1 - x_i(1-\f)\right] } .
\end{aligned}
    \end{equation}
\end{corollary}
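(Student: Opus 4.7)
The plan is to apply Theorem~\ref{T:eGF} with a carefully chosen generating function $g_i$ that tracks the number of occurrences of letter $i$ but puts no restriction on run length (since levels are not being tracked here). In the two-step framework, $g_i$ generates a single run of letter $i$, so a run of length $j \geq 1$ contributes $x_i^j$, and hence
\[
 g_i \;=\; \sum_{j \geq 1} x_i^j \;=\; \frac{x_i}{1-x_i}.
\]
Justifying this choice is really the only conceptual step; the rest is algebraic manipulation.

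Next I would compute the building blocks $1 + g_i \f$ and $1 + g_i \w$. A direct calculation gives
\[
 1 + g_i \f \;=\; \frac{1 - x_i(1-\f)}{1-x_i}, \qquad 1 + g_i \w \;=\; \frac{1 - x_i(1-\w)}{1-x_i}.
\]
Taking products,
\[
 P_k \;=\; \prod_{i=1}^{k}(1+g_i \f) \;=\; \frac{\prod_{i=1}^{k}\bigl[1 - x_i(1-\f)\bigr]}{\prod_{i=1}^{k}(1-x_i)},
\]
and similarly
\[
 Q_k \;=\; \frac{\prod_{i=1}^{k}\bigl[1 - x_i(1-\w)\bigr]}{\prod_{i=1}^{k}(1-x_i)}.
\]

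Finally, I would substitute these expressions into the formula $\G_k = (P_k - Q_k)/(\f Q_k - \w P_k)$ from Theorem~\ref{T:eGF}. The common denominator $\prod_{i=1}^{k}(1-x_i)$ appears in both the numerator and denominator of the ratio and cancels, yielding exactly the claimed expression. No obstacle of any real difficulty appears: the one thing to justify with care is that the chosen $g_i$ correctly encodes the marginal over run lengths while preserving the per-letter count via $x_i$, which follows directly from the definition of $g_i$ in the two-step approach cited in the setup of Theorem~\ref{T:rGF}.
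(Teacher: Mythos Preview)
Your proposal is correct and follows exactly the paper's own approach: the paper's proof of this corollary consists solely of specifying $g_i = \sum_{j\ge 1} x_i^j = x_i/(1-x_i)$ and leaving the substitution into Theorem~\ref{T:eGF} implicit. You have simply written out the algebra of that substitution and the cancellation of $\prod_i(1-x_i)$, which is precisely what the paper intends.
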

\begin{proof}
  Let
  \[
 g_i =  \sum_{j=1}  x_i^j =
\frac{x_i}{1-x_i} .
\]
\end{proof}
Eq.~\eqref{E:xGF} was obtained by ~\cite{carlitz1972}, ~\cite{jackson1977}
and ~\cite{reilly1979} using different methods.

\begin{corollary} \label{C:xt}
  The explicit formula for $\G_{k}$ for the joint distribution of
  rises, falls, and the total number of runs is given by
  \begin{equation} \label{E:xtGF}
\begin{aligned}
  \G_k &= \sum_{\bn, r, s, b}  \rf(\bn; r, s, b) \w^r \f^s t^b \bx^\bn \\
  &= \dfrac{ \prod_{i=1}^k \left[ 1 - x_i(1-\f t)\right] - \prod_{i=1}^k \left[ 1-x_i (1-\w t)\right] }
  { \f \prod_{i=1}^k \left[ 1-x_i (1-\w t)\right] - \w \prod_{i=1}^k \left[ 1 - x_i(1-\f t)\right] } .
\end{aligned}
\end{equation}
\end{corollary}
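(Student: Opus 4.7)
The plan is to apply Theorem~\ref{T:eGF} with a suitably chosen generating function $g_i$ that tracks runs in addition to lengths. In the matrix formulation underlying the explicit formula, each factor $g_i$ contributes the generating function for a single maximal run of the letter $i$. Thus, to count the total number of runs by a new indeterminate $t$, I would multiply each $g_i$ by $t$, taking
\[
g_i \;=\; \sum_{j\geq 1} t\, x_i^j \;=\; \frac{t\, x_i}{1 - x_i}.
\]
This way every occurrence of a run of letter $i$ in the two-step construction contributes exactly one factor of $t$, independent of the run's length, so the exponent of $t$ in the resulting series equals $b$.

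Next I would substitute this $g_i$ into the explicit formula
\[
\G_k = \frac{\prod_{i=1}^k(1 + g_i \f) - \prod_{i=1}^k(1 + g_i \w)}{\f \prod_{i=1}^k(1 + g_i \w) - \w \prod_{i=1}^k(1 + g_i \f)}
\]
from Theorem~\ref{T:eGF}. A direct computation gives
\[
1 + g_i \w \;=\; 1 + \frac{\w\, t\, x_i}{1 - x_i} \;=\; \frac{1 - x_i(1 - \w t)}{1 - x_i},
\]
and analogously for $1 + g_i \f$. The common factor $\prod_{i=1}^k (1 - x_i)$ then appears in both the numerator and denominator of $\G_k$ and cancels, yielding exactly the right-hand side of Eq.~\eqref{E:xtGF}.

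There is no serious obstacle here; the only point that requires a word of justification is the claim that multiplying $g_i$ by $t$ correctly tracks the number of runs. This follows from the two-step framework of \citep{Kong2006,Kong2016} used in Theorem~\ref{T:rGF}: the matrix $\mathbf{M}_k$ arranges letters into an alternating sequence of runs, with each entry drawing one run of some letter via the corresponding $g_i$. Hence the exponent of $t$ in any monomial of $\G_k$ equals the total number of runs in the associated sequence, and the substitution above completes the proof.
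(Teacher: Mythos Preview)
Your proposal is correct and follows exactly the paper's approach: the paper's entire proof is the one-line choice $g_i = t\sum_{j\ge 1} x_i^j = \dfrac{x_i t}{1-x_i}$, leaving the substitution into Theorem~\ref{T:eGF} and the cancellation of $\prod_i(1-x_i)$ implicit. You have simply spelled out those routine steps and added a sentence justifying why the factor $t$ tracks runs.
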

\begin{proof}
  Let
  \[
 g_i = t \sum_{j=1}  x_i^j =
\frac{x_i t}{1-x_i} .
\]
\end{proof}
As an example, from Eq.~\eqref{E:xtGF} we get for $k=2$, $n_1=2$, and $n_2=3$:
\[
  [x_1^2 x_2^3] \G_k = \sum_{r, s, b}  \rf(\bn; r, s, b) \w^r \f^s t^{b}
  =  \f t^2  + \w t^2 + 3 \w \f t^3 + 2 \w \f^2 t^4 + 2 \w^2 \f t^4  + \w^2 \f^2 t^5 ,
   \]
which is enumerated in Table~\ref{Tb:x2x3}.

\begin{corollary} \label{C:xtz}
  The explicit formula for $\G_{k}$ for the joint distribution of rises, falls, levels,
  and the total number of runs is given by
  \begin{equation} \label{E:xtzGF}
\begin{aligned}
  \G_k &= \sum_{\bn, r, s, l, b}  \rf(\bn; r, s, l, b) \w^r \f^s \z^l t^b \bx^\bn \\
  &= \dfrac{ \prod_{i=1}^k \left[ 1 - x_i(\z-\f t)\right] - \prod_{i=1}^k \left[ 1-x_i (\z-\w t)\right] }
  { \f \prod_{i=1}^k \left[ 1-x_i (\z-\w t)\right] - \w \prod_{i=1}^k \left[ 1 - x_i(\z-\f t)\right] } .
\end{aligned}
\end{equation}
\end{corollary}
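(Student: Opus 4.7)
The plan is to apply Theorem~\ref{T:eGF} directly, with an appropriate choice of the single-integer generating function $g_i$ that tracks lengths, levels, and runs simultaneously. Recall that in the two-step approach $g_i$ is the generating function for a single run composed entirely of the integer $i$. Such a run of length $j$ uses $j$ copies of integer $i$, contributes exactly one run, and produces $j-1$ adjacent equal pairs, i.e.\ $j-1$ levels. Thus the natural choice is
\[
  g_i \;=\; t\sum_{j\ge 1} \z^{\,j-1} x_i^{\,j} \;=\; \dfrac{x_i\, t}{1 - \z\, x_i}.
\]
This specializes to the $g_i$ used in Corollary~\ref{C:xt} when $\z=1$, and to that in Corollary~\ref{C:x} when in addition $t=1$, so it is consistent with the earlier cases.

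Next I would plug this $g_i$ into the explicit formula Eq.~\eqref{E:eGF} and simplify. A short calculation gives
\[
  1 + g_i\, \f \;=\; \dfrac{1 - x_i(\z - \f t)}{1 - \z x_i},
  \qquad
  1 + g_i\, \w \;=\; \dfrac{1 - x_i(\z - \w t)}{1 - \z x_i},
\]
so that both $P_k = \prod_{i=1}^k (1+g_i\f)$ and $Q_k = \prod_{i=1}^k (1+g_i\w)$ carry the common denominator $\prod_{i=1}^k (1 - \z x_i)$. This factor appears identically in the numerator and denominator of the ratio $(P_k - Q_k)/(\f Q_k - \w P_k)$ and therefore cancels, leaving exactly the right-hand side of Eq.~\eqref{E:xtzGF}.

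Finally I would verify the combinatorial claim that this $g_i$ correctly encodes the joint statistic $(\bn, r, s, l, b)$ when substituted into the matrix formulation. The key point is that the two-step construction glues together maximal runs using the indeterminates $\w$ and $\f$ to record a rise or fall at each junction, while the internal structure of each run (its length and the $j-1$ levels it contains) is handled entirely inside $g_i$. Since $l$ counts only within-run adjacencies and $b$ counts the runs themselves, the factor $t\z^{j-1}x_i^{j}$ is exactly the weight a single run of integer $i$ of length $j$ should carry, and no double counting across runs occurs. There is no substantive obstacle: the only thing to be careful about is the bookkeeping for levels versus rises and falls, which is cleanly separated because levels live strictly inside runs.
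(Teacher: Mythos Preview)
Your proposal is correct and follows exactly the paper's approach: the paper's proof consists solely of specifying $g_i = t\sum_{j\ge 1} x_i^j \z^{j-1} = \dfrac{x_i t}{1-\z x_i}$ and appealing to Theorem~\ref{T:eGF}. You have simply supplied the algebraic simplification and the combinatorial justification that the paper leaves implicit.
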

\begin{proof}
  Let
  \[
   g_i = t \sum_{j=1}  x_i^j \z^{j-1} =
\frac{x_i t}{1-\z x_i}.
  \]
\end{proof}

\begin{corollary} \label{C:xtiz}
  The explicit formula for $\G_{k}$ for the joint distribution of rises, falls, levels,
  and the numbers of runs for each integer is given by
  \begin{equation} \label{E:xtizGF}
\begin{aligned}
  \G_k &= \sum_{\bn, r, s, l, \bb}  \rf(\bn; r, s, l, \bb) \w^r \f^s \z^l \bt^\bb \bx^\bn \\
  &= \dfrac{ \prod_{i=1}^k \left[ 1 - x_i(\z-\f t_i)\right] - \prod_{i=1}^k \left[ 1-x_i (\z-\w t_i)\right] }
  { \f \prod_{i=1}^k \left[ 1-x_i (\z-\w t_i)\right] - \w \prod_{i=1}^k \left[ 1 - x_i(\z-\f t_i)\right] } .
\end{aligned}
\end{equation}
\end{corollary}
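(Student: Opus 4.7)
The plan is to obtain Corollary~\ref{C:xtiz} as a direct specialization of Theorem~\ref{T:eGF}. Recall that in the derivation of $\G_k$ in Theorem~\ref{T:rGF}, $g_i$ plays the role of the generating function attached to one maximal run of the integer $i$. So I would choose $g_i$ to jointly track, within a single such run, three pieces of data: (a) the length of the run via $x_i$, (b) the presence of one run of integer $i$ via $t_i$, and (c) each internal level via $\z$. A run of $i$ of length $j \geq 1$ contains $j-1$ levels, and so contributes $t_i\, \z^{j-1} x_i^j$. Summing over $j \geq 1$ gives
\[
g_i \;=\; t_i \sum_{j\geq 1} \z^{j-1} x_i^j \;=\; \frac{x_i t_i}{1 - \z x_i}.
\]

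With this $g_i$ the remaining work is purely algebraic: a short calculation gives
\[
1 + g_i \f \;=\; \frac{1 - x_i(\z - \f t_i)}{1 - \z x_i}, \qquad 1 + g_i \w \;=\; \frac{1 - x_i(\z - \w t_i)}{1 - \z x_i},
\]
so that both $P_k = \prod_{i=1}^k (1 + g_i \f)$ and $Q_k = \prod_{i=1}^k (1 + g_i \w)$ acquire the common denominator $\prod_{i=1}^k (1 - \z x_i)$. Substituting into Eq.~\eqref{E:eGF} and cancelling this common factor between numerator and denominator produces exactly Eq.~\eqref{E:xtizGF}.

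There is no real technical obstacle here, since the pattern precisely mirrors Corollaries~\ref{C:xt} and \ref{C:xtz}; the only conceptual point to verify is that the variables $\w$ and $\f$ in Theorem~\ref{T:eGF} track only inter-run transitions (between maximal runs of distinct integers) while the intra-run structure is entirely encapsulated by the chosen $g_i$. Once this separation-of-roles is acknowledged, counting levels through the $\z^{j-1}$ factor inside $g_i$ is consistent and introduces no double counting, and the corollary follows in essentially one line.
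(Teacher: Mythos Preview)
Your proposal is correct and follows exactly the paper's approach: the paper's proof consists solely of specifying $g_i = t_i \sum_{j\ge 1} x_i^j \z^{j-1} = \dfrac{x_i t_i}{1-\z x_i}$ and substituting into Theorem~\ref{T:eGF}. Your write-up is in fact more explicit than the paper's, since you spell out both the combinatorial justification for this choice of $g_i$ and the algebraic cancellation of the common factor $\prod_i(1-\z x_i)$.
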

\begin{proof}
  Let
  \[
   g_i = t_i \sum_{j=1}  x_i^j \z^{j-1} =
\frac{x_i t_i}{1-\z x_i}.
  \]
\end{proof}

For $k=2$, $n_1=2$, and $n_2=3$, Eq.~\eqref{E:xtizGF} gives
\begin{align*}
  [x_1^2 x_2^3] \G_k &= \sum_{r, s, \bb}  \rf(\bn; r, s, \bb) \w^r \f^s t_1^{b_1} t_2^{b_2} \\
  &=
  \f \z^3 t_1 t_2 + \w \z^3 t_1 t_2 + 2 \w \f \z^2 t_1 t_2^2 + \w \f \z^2 t_1^2 t_2 + 2 \w \f^2 \z t_1^2 t_2^2
  + 2\w^2 \f \z t_1^2 t_2^2 + \w^2 \f^2 t_1^2 t_2^3 ,
\end{align*}
as shown in Table~\ref{Tb:x2x3}.

\section{Explicit formula for $\rf(\bn;r,s)$, the number of rises and falls for a given specification }
   
From Eq.~\eqref{E:xGF} of Corollary~\ref{C:x} we obtain the explicit formula for
$\rf(\bn;r,s)$.
\begin{theorem}[Explicit formula for $\rf(\bn;r,s)$] \label{T:ers}
The explicit formula for $\rf(\bn;r,s)$ is given by 
\[
A(\bn; r, s) =
\sideset{}{'} \sum^{n_i}_{d_i = 0}
\sideset{}{'}\sum^{n_i}_{t_i = 0} 
\left[
\prod_{i=1}^k f(n_i, r-t+1, d_i, t_i)
-
\prod_{i=1}^k f(n_i, r-t, d_i, t_i)
\right]
\]
where the sum $\sum\nolimits'$ is taken for $d + t = r+s+1$
and $t \le r$
with $d = \sum_i d_i$ and $t = \sum_i t_i$.
The function $f$ is defined as 
\begin{equation} \label{E:f} 
f(n, m, d, t) =
\begin{dcases}
  \binom{n-1}{t-1}
  \binom{m + t - 1}{t} (-1)^t,  & d = 0,\\
  \frac{m}{d}
  \binom{n-1}{d+t-1}
  \binom{m + t -1}{d+t-1}
  \binom{d+t-1}{t}(-1)^t ,  & d \ne 0 .
  \end{dcases}
\end{equation}
\end{theorem}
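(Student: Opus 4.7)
The plan is to extract the coefficient $[\w^r \f^s \bx^\bn]\G_k = \rf(\bn; r, s)$ from the generating function in Corollary~\ref{C:x}. After dividing numerator and denominator of Eq.~\eqref{E:xGF} by $\prod_{i=1}^k(1-x_i)$, $\G_k$ takes the symmetric form
\[
\G_k \;=\; \frac{E(\f) - E(\w)}{\f\, E(\w) - \w\, E(\f)}, \qquad E(\lambda) := \prod_{i=1}^k (1 + g_i \lambda), \qquad g_i := \frac{x_i}{1-x_i}.
\]
Expanding the reciprocal of the denominator as the geometric series $(\f E(\w))^{-1}\sum_{m\geq 0}(\w E(\f)/(\f E(\w)))^m$ and distributing $E(\f) - E(\w)$ yields
\[
\G_k \;=\; \sum_{m \geq 0}\frac{\w^m E(\f)^{m+1}}{\f^{m+1} E(\w)^{m+1}} \;-\; \sum_{m \geq 0}\frac{\w^m E(\f)^m}{\f^{m+1} E(\w)^m}.
\]
Each summand is a Laurent series in $\f$, but the coefficient of $\w^r \f^s \bx^\bn$ with $r, s \geq 0$ in either sum is a well-defined finite sum over $m \in \{0, 1, \dots, r\}$.

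The extraction uses three elementary identities:
\begin{align*}
[\f^{d_i}](1 + g_i \f)^M &= \binom{M}{d_i}\, g_i^{d_i}, \\
[\w^{t_i}](1 + g_i \w)^{-M} &= (-1)^{t_i}\binom{M + t_i - 1}{t_i}\, g_i^{t_i}, \\
[x_i^{n_i}]\, g_i^{d_i + t_i} &= \binom{n_i - 1}{d_i + t_i - 1}.
\end{align*}
Applied to the first sum with $M = m+1$ and to the second with $M = m$, matching $\w$- and $\f$-exponents forces $\sum_i t_i = r - m$ and $\sum_i d_i = s + m + 1$. Writing $t := \sum_i t_i$ and $d := \sum_i d_i$ gives $d + t = r + s + 1$, while $m \geq 0$ translates to $t \leq r$: these are precisely the restrictions indicated by the primed sum in the theorem. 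Substituting $m = r - t$, the first-sum summand becomes
\[
\prod_{i=1}^k (-1)^{t_i}\binom{r-t+t_i}{t_i}\binom{r-t+1}{d_i}\binom{n_i-1}{d_i+t_i-1},
\]
which is $\prod_i f(n_i, r-t+1, d_i, t_i)$, and the second-sum summand is the analogous product with $r - t + 1$ replaced by $r - t$.

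The identification with $f$ as defined in Eq.~\eqref{E:f} uses the unified closed form $f(n,m,d,t) = \binom{n-1}{d+t-1}\binom{m}{d}\binom{m+t-1}{t}(-1)^t$, which agrees with the $d = 0$ branch directly and, via the identity $\binom{m}{d}\binom{m+t-1}{t} = \frac{m}{d}\binom{m+t-1}{d+t-1}\binom{d+t-1}{t}$ valid for $d \geq 1$, with the $d \neq 0$ branch. The main obstacle is choosing the correct expansion direction: expanding in $\w E(\f)/(\f E(\w))$ rather than its reciprocal (which would yield the equivalent formula with $r$ and $s$ swapped) is what produces the constraint $t \leq r$. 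Everything else is bookkeeping: verifying that the Laurent-series negative powers of $\f$ are consistent with a finite extraction for $s \geq 0$, and subtracting the two products term by term.
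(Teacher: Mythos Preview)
Your argument is correct and follows the same geometric-series expansion of $\G_k$ as the paper, but the coefficient extraction is genuinely more direct. The paper keeps the factors in the form $[1-x_a(1-\f)]^{i}/[1-x_a(1-\w)]^{i}$ and expands each separately by the binomial theorem; this introduces an extra summation index $j_i$ per letter and leads to the five-argument summand $h(n,j,m,d,t)$. Eliminating the $j$-sum is then done via the Gosper--Zeilberger algorithm, which produces the first-order recurrence $(n-d-t+1)f(n+1,m,d,t)=n\,f(n,m,d,t)$ and hence Eq.~\eqref{E:f}. Your key move---dividing through by $\prod_i(1-x_i)$ so that the factors become $(1+g_i\f)^{M}(1+g_i\w)^{-M}$ with $g_i=x_i/(1-x_i)$---collapses that extra index automatically: expanding in $\f$ and $\w$ yields a single power $g_i^{d_i+t_i}$, and $[x_i^{n_i}]g_i^{d_i+t_i}=\binom{n_i-1}{d_i+t_i-1}$ gives the closed form for $f$ without any hypergeometric machinery. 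In effect you are exploiting the $g_i$-form of Theorem~\ref{T:eGF} rather than the $x_i$-form of Corollary~\ref{C:x}. What you gain is a fully elementary derivation; what the paper's route illustrates is how creative telescoping handles such sums mechanically even when a clever substitution is not apparent. Your remark about the Laurent-series bookkeeping is adequate: working in formal Laurent series in $\f$ over $\mathbb{Z}[[\w,\bx]]$, the geometric series converges $\w$-adically, and for fixed $r,s\ge 0$ only the terms with $0\le m\le r$ contribute.
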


\begin{proof}
  Let $P_k = \prod_{i=1}^k \left[ 1 - x_i(1-\f) \right]$ and
  $Q_k = \prod_{i=1}^k \left[ 1 - x_i(1-\w) \right]$.
  \begin{align*}
    \G_k &= \frac{P_{k} - Q_{k}}{\f Q_{k} - \w P_{k}}
    = \frac{P_{k} - Q_{k}}{\f Q_{k} \left[1 - \frac{\w P_{k}}{\f Q_{k} }  \right]}
    = \frac{P_{k} - Q_{k}}{\f Q_{k} }
    \sum_i \left[1 - \frac{\w P_{k}}{\f Q_{k} }  \right]^i \\
    &= \sum_i \frac{\w^i}{\f^{i+1}}
    \left[ \frac{P_{k}^{i+1}}{Q_{k}^{i+1}}  - \frac{P_{k}^{i}}{Q_{k}^{i}}  \right]
    = \sum_i \frac{\w^i}{\f^{i+1}}
    \left[
      \prod_{a=1}^k
      \frac{\left[ 1 - x_a(1-\f) \right]^{i+1}}
           {\left[ 1 - x_a(1-\w) \right]^{i+1}}
      - \prod_{a=1}^k
      \frac{\left[ 1 - x_a(1-\f) \right]^{i}}
           {\left[ 1 - x_a(1-\w) \right]^{i}} 
           \right]   .
\end{align*}
Each term in the last expression can be further expanded by binomial theorem,
  and after changing variables, we get
\begin{align*}
  A(\bn; r, s) &= \left[\w^r \f^s \prod_{i=1}^k x_i^{n_i} \right]  \G_k \\
  &=
  \sum_{j_i=0}^{n_i}
  \sideset{}{'}\sum_{d_i=0}^{j_i}
  \sideset{}{'}\sum_{t_i=0}^{n_i-j_i}
  \left[
\prod_{i=1}^k h(n_i, j_i, r-t+1, d_i, t_i)
-
\prod_{i=1}^k h(n_i, j_i, r-t, d_i, t_i)
\right],
\end{align*}
where  
  \[
  h(n, j, m, d, t) = 
  \binom{j}{d}
  \binom{n-j}{t}
  \binom{m}{j}
  \binom{n-j+m-1}{n-j}
  (-1)^{j+d+t} .
  \]
  By using Gosper-Zeilberger method
  ~\citep{AEQB},
  we can simplify the above expression by getting rid
  of the sums of $j_i$.
  Let
  \[
    f(n, m, d, t) = \sum_{j=0}^n h(n, j, m, l, t)
    \]
  be the sum of $h(n, j, m, d, t)$ with respect to $j$.  
  The Gosper-Zeilberger method constructs $G(n, j)$ as
  \[
   G(n, j) = h(n, j, m, d, t) \frac{(d-j)(n+m-j)}{n-t+1-j} ,
   \]
   which satisfies
   \[
     (n-d-t+1) h(n+1, j, m, l, t) - n h(n, j, m, l, t) =  G(n, j+1) - G(n, j).
   \]
  Summing with respect to $j$ leads to 
  the following linear recurrence equation satisfied by $f(n, m, d, t)$:  	
  \[
   (n-d-t+1) f(n+1, m, d, t)  - n f(n, m, d, t) = 0.
  \]
  By solving the recurrence we obtain the identity in Eq.~\eqref{E:f}. 
\end{proof}

\begin{thebibliography}{16}
\providecommand{\natexlab}[1]{#1}
\providecommand{\url}[1]{\texttt{#1}}
\providecommand{\urlprefix}{URL }

\bibitem[{Balakrishnan and Koutras(2002)}]{Balakrishnan2002}
Balakrishnan, N. and Koutras, M.~V., 2002.
\newblock \emph{Runs and Scans with Applications}.
\newblock John Wiley \& Sons, New York, USA.

\bibitem[{Carlitz(1972)}]{carlitz1972}
Carlitz, L., 1972.
\newblock Enumeration of sequences by rises and falls: a refinement of the
  Simon Newcomb problem.
\newblock \emph{Duke Math. J.} 39, 267--280.

\bibitem[{Carlitz(1975)}]{carlitz1975}
Carlitz, L., 1975.
\newblock Permutations, sequences and special functions.
\newblock \emph{SIAM Review} 17, 298--322.

\bibitem[{Dillon and Roselle(1969)}]{dillon1969}
Dillon, J.~F. and Roselle, D.~P., 1969.
\newblock Simon Newcomb's problem.
\newblock \emph{SIAM J. Appl. Math.} 17, 1086--1093.

\bibitem[{Fu and Lou(2000)}]{Fu2000}
Fu, J.~C. and Lou, W.~W., 2000.
\newblock Joint distribution of rises and falls.
\newblock \emph{Annals of the Institute of Statistical Mathematics} 52,
  415--425.

\bibitem[{Graham \emph{et~al.}(1994)Graham, Knuth, and
  Patashnik}]{Graham:1994:CMF:562056}
Graham, R.~L., Knuth, D.~E., and Patashnik, O., 1994.
\newblock \emph{Concrete Mathematics: A Foundation for Computer Science}.
\newblock Addison-Wesley Longman Publishing Co., Inc., Boston, MA, USA, 2nd
  edition.

\bibitem[{Jackson(1977)}]{jackson1977}
Jackson, D.~M., 1977.
\newblock The unification of certain enumeration problems for sequences,.
\newblock \emph{Journal of Combinatorial Theory, Series A} 22, 92--96.

\bibitem[{Kong(2006)}]{Kong2006}
Kong, Y., 2006.
\newblock Distribution of runs and longest runs: A new generating function
  approach.
\newblock \emph{Journal of the American Statistical Association} 101,
  1253--1263.

\bibitem[{Kong(2015)}]{Kong2013}
Kong, Y., 2015.
\newblock Distributions of runs revisited.
\newblock \emph{Communications in Statistics - Theory and Methods} 44,
  4663--4678.

\bibitem[{Kong(2016)}]{Kong2014}
Kong, Y., 2016.
\newblock Number of appearances of events in random sequences: A new approach
  to non-overlapping runs.
\newblock \emph{Communications in Statistics - Theory and Methods} 45,
  6765--6772.

\bibitem[{Kong(2017{\natexlab{a}})}]{Kong2016}
Kong, Y., 2017{\natexlab{a}}.
\newblock The mth longest runs of multivariate random sequences.
\newblock \emph{Annals of the Institute of Statistical Mathematics} 69, 497 --
  512.

\bibitem[{Kong(2017{\natexlab{b}})}]{Kong2015}
Kong, Y., 2017{\natexlab{b}}.
\newblock Number of appearances of events in random sequences: a new generating
  function approach to \emph{Type II} and \emph{Type III} runs.
\newblock \emph{Annals of the Institute of Statistical Mathematics} 69,
  489--495.

\bibitem[{MacMahon(1915)}]{macmahon1915}
MacMahon, P., 1915.
\newblock \emph{Combinatory Analysis}.
\newblock Cambridge University press.

\bibitem[{Mood(1940)}]{Mood1940}
Mood, A.~M., 1940.
\newblock The distribution theory of runs.
\newblock \emph{Annals of Mathematical Statistics} 11, 367--392.

\bibitem[{Petkovs\v{e}k \emph{et~al.}(1996)Petkovs\v{e}k, Wilf, and
  Zeilberger}]{AEQB}
Petkovs\v{e}k, M., Wilf, H.~S., and Zeilberger, D., 1996.
\newblock \emph{$A = B$}.
\newblock A K Peters Ltd, Wellesley, MA, USA.

\bibitem[{Reilly(1979)}]{reilly1979}
Reilly, J.~W., 1979.
\newblock A combinatorial solution of two related problems in sequence
  enumeration.
\newblock \emph{Journal of Combinatorial Theory, Series A} 26, 304--307.

\end{thebibliography}

 \newcommand{\noop}[1]{}

\end{document}